\newtheorem{thm}{Theorem}
\newtheorem{lemma}{Lemma}
\newtheorem{dfn}{Definition}
\newtheorem{prop}{Proposition}
\title{Strictly positive definite kernels on the $2$-sphere: beyond radial symmetry}
\author{ Janin J\"ager \\ \tiny{
janin.jaeger@math.uni-giessen.de} }
 \affil{
Lehrstuhl Numerische Mathematik,
Justus-Liebig University, 35392 Giessen, Germany.}
\date{\today}
\begin{document}
\def\R{{\mathbb R}}
\def\Z{{\mathbb Z}}
\def\N{{\mathbb N}}
\def\S{{\mathbb{S}^2}}
\def\wh{\widehat}
\def\C{{\mathbb C}}

\maketitle

\begin{abstract}
The paper introduces a new characterisation of strict positive definiteness for kernels on the 2-sphere without assuming the kernel to be radially (isotropic) or axially symmetric. The results use the series expansion of the kernel in spherical harmonics. Then additional sufficient conditions are proven for kernels with a block structure of expansion coefficients. These generalise the result derived by Chen et al.\ \citep{Chen2003} for radial kernels to non-radial kernels.
\end{abstract}

\textit{keywords:} positive definite kernels, covariance functions, two-sphere, \\
\textit{2010 MSC:} 33B10; 33C45; 42A16; 42A82; 42C10

\section{Introduction}
In the last five years, there has been a tremendous number of publications stating new results on positive definite kernels on spheres, see for example  \citep{Beatson2014, Gneiting2013,Hubbert2015,Xu2018, Truebner2017, Arafat2018, Bissiri2019,Nie2018, Berg2017}. Most of these focus on isotropic positive definite kernels, which are kernels that only depend on the geodetic distance of their arguments.  Isotropic kernels  are used in approximation theory, where they are often referred to as spherical radial basis functions \citep{Castell2004, Hubbert2001,Beatson2018,Beatson2017}  and are for example applied in geostatistics and physiology \citep{Jaeger2016, Fornberg2015}. They are also of importance in statistics where they occur as correlation functions of homogeneous random fields on spheres \citep{Lang2015}.

Recently  the use of axial-symmetric kernels on the sphere was studied and used for the approximation of global weather data in  \citep{Bissiri2019}. This publication will characterise even more general positive definite kernels on the sphere. We give necessary and sufficient conditions for kernels which are only required to be continuous and Hermitian.  The general results will allow us to reproduce results achieved  earlier easily for isotropic and axially symmetric kernels. Further, we will in Section 3 characterise a new class of positive definite spherical kernels which are not isotropic but possess certain desirable smoothness properties.

We will briefly summarize necessary definitions in the first section and then state the general results for characterisation of positive definite spherical kernels in Section 2. In the third section, we describe the connection to the existing results and add a new class of strictly positive definite kernels.

\subsection{Problem description 
and Background}

We focus on interpolation problems on the $2$-sphere 
$$\mathbb{S}^2=\lbrace \xi \in \R^3 \vert \xi_1^2+\xi_2^2+\xi_3^2=1 \rbrace,$$
 where a finite set of distinct data sites $\Xi\subset \mathbb{S}^2$ and given values $f(\xi) \in \C$, $\xi \in \Xi$, of a possibly elsewhere unknown function $f$ on the sphere are given.

The approximant is formed as a linear combination of kernels \mbox{$K:\mathbb{S}^2 \times \mathbb{S}^2 \rightarrow \C$.} Taking the form 
\begin{equation}\label{eq:Interpolant}  s_f(\zeta)=\sum_{\xi\in \Xi} c_{\xi} K(\zeta, \xi), \qquad  \zeta \in \mathbb{S}^2.
\end{equation}
The  problem of finding such an approximant $s_f$ satisfying
 \begin{equation}
 s_f(\xi)=f(\xi), \qquad \forall \xi \in \Xi,
 \end{equation}
 is uniquely solvable under certain conditions on $K$, which we  will now introduce. We assume all the kernels to be Hermitian, meaning they satisfy $K(\xi,\zeta)=\overline{K(\zeta, \xi)}$ so that the positive definiteness of the kernel will ensure the solvability of the interpolation problem for arbitrary data sets.
 \begin{dfn}
Let $H$ be a $\C$-Hilbert space with inner product $\langle \cdot , \cdot \rangle$. Then a linear and continuous operator $P:H\rightarrow H$ is called positive if it is self adjoint and 
$$\langle P(x),x \rangle \geq0 ,\qquad  \forall x \in H.$$
A linear and continuous operator $P:H\rightarrow H$ is called strictly positive if it is self adjoint, bounded  and 
$$\langle P(x),x\rangle >0 ,\qquad  \forall x \in H\setminus \lbrace 0\rbrace .$$
 \end{dfn}
\begin{dfn}\label{DF:SPD}
A Hermitian kernel $K:\mathbb{S}^2\times \mathbb{S}^{2}\rightarrow \C$ is {\/\rm
positive definite on} $\mathbb{S}^{2}$ if and only if the matrix  $K_{\Xi}=\left\lbrace
K(\xi,\zeta))\right\rbrace_{\xi,\zeta \in \Xi}$ induces a positive operator on $\C^{\vert \Xi\vert}$ via left  multiplication for arbitrary finite sets of distinct points $\Xi\subset \S$.

The kernel is {\/\rm strictly positive definite} if $K_{\Xi}$ induces a strictly positive operator on $\C^{\vert \Xi\vert}$ via left  multiplication  for  arbitrary finite sets of distinct points $\Xi$.
\end{dfn} 
The last definition deviates from the standard notation used in approximation theory because we define positive definiteness via the properties of an operator instead of the matrix which defines the operator. The gain in our specific setting is that we can easily switch between finite and infinite-dimensional Hilbert spaces using the operator notation and that we do not have the usual notational inconsistency of defining positive definite kernels using semi-definite matrices. 
We assume throughout this paper that $K$ is continuous in both arguments, it is thereby square-integrable and can be represented as
 \begin{equation}\label{eq:Kernform} K(\xi, \zeta)=\sum_{\ell,\ell'=1}^{\infty}a_{\ell,\ell'}F_{\ell}(\xi)\overline{F_{\ell'}(\zeta)},\qquad \forall \xi,\zeta \in \S ,
 \end{equation}
where the $F_{\ell}$ form an orthonormal basis of the eigenfunctions of the Laplace-Beltrami operator on the sphere. 
The Laplace-Beltrami operator on the sphere is defined using polar coordinates as 
$$\Delta _{\S} f(\theta,\phi) = (\sin\phi)^{-1} \frac{\partial}{\partial \phi}\left(\sin\phi\frac{\partial f}{\partial \phi}\right) + (\sin\phi)^{-2} \frac{\partial^2}{\partial \theta^2}f.$$
Its eigenfunctions corresponding to the eigenvalues $k(k+1)$ are the spherical harmonics, we will in the third section change to the more common double index notation, $Y_k^m$,  of these functions. 
It is well known and used in the characterisation above, that every function in $L^2(\S)$ can be represented as a spherical harmonic expansion of the form 
\begin{equation}\label{eq:Serrep}
g(\xi)= \sum_{\ell=1}^{\infty}\hat{g}_{\ell}F_{\ell}(\xi),\qquad \text{ with } \hat{g}_{\ell}=\int_{\S}g(\xi)F_{\ell}(\xi)\, d\xi.
\end{equation}
For this expansion the Parseval equation for spherical harmonics  holds: 
$$\Vert g\Vert_{L^2(\S)}^2=\sum_{\ell=1}^{\infty} \vert \hat{g}_{\ell}\vert ^2.$$

We also define the vector space  
$$Y:=\operatorname{span}\left \lbrace \left( F_{\ell}(\xi)\right )_{\ell=1}^{\infty}  \big\vert  \xi \in \S \right \rbrace \subset \C^{\infty}.$$

Since we assume $K$ to be continuous the series \eqref{eq:Kernform} will converge for all $\xi,\zeta  \in \S $. Another consequence is that  
$$\int_{\S} \int_{\S}(K(\xi,\zeta))^2\,d\xi \, d\zeta < \infty, $$
which ensures $\sum_{\ell, \ell'=1}^{\infty} \vert a_{\ell,\ell'}\vert^2 < \infty$ because of orthonormality and thereby makes the linear operator 
\begin{equation}\label{eq:DefOp}
 A(y):=\left( \sum_{\ell=1}^{\infty}  a_{\ell,\ell'}y_{\ell}\right)_{\ell'=1}^{\infty},\qquad y\in\ell^2 \text{ or } y\in Y,
\end{equation}
bounded on $\ell^2 $, by which we denote the set of square-summable complex-valued sequences.
 We define the inner product 
 $$\langle x, y \rangle:=\sum_{\ell=1}^{\infty}\overline{x_{\ell}}y_{\ell}$$
 and will use it for $x,y\in \ell^2$ as well as for $x,y\in Y$. 
Even though we do not know whether  $A(y)$ is in $\ell^2$ for elements $y\in Y$, we know that any $y\in Y$ can be represented as $y= \left( \sum_{\xi\in \Xi}c_{\xi} F_{\ell}(\xi)\right )_{\ell=1}^{\infty}$  and therefore 
$$\langle A(y),y \rangle=\sum_{\xi,\zeta\in\Xi}c_{\xi}\overline{c_{\zeta}}K(\xi,\zeta)$$ 
is finite for all $y\in Y$.

Also we will need the definition of the Sobolev spaces $W_2^r(\S)$ to be the subspace of $L^2(\S)$ with finite norm
$$ \Vert f \Vert_{W^r_2(\S)}:= \left( \sum_{\ell=1}^{\infty} (1+\lambda_{\ell})^{r} \vert \hat{f}_{\ell}\vert^2 \right)^{1/2},$$
where $\lambda_{\ell}$ is the eigenvalue corresponding to the eigenfunction $F_{\ell}$. Further we need $W_2^r(\S \times \S)$ with norm  
$$ \Vert K \Vert_{W^r_2(\S\times \S)}:= \left( \sum_{\ell,\ell'=1}^{\infty} (1+\lambda_{\ell}+\lambda_{\ell'})^{r} \vert a_{\ell,\ell'}\vert^2\right)^{1/2}$$
as well as the continuous embedding lemma which says that the above Sobolev spaces are consisting only of continuous functions if $r>1$. 
\section{Characterisation of (strictly) positive definite kernels}
The first two results we state are closely connected to the results of Dyn, Narcowich and Ward \citep{Dyn1997} which were stated for more general manifolds but used distributions in certain Sobolev spaces instead of sequences to describe positive definiteness.  Our approach focuses more on the matrix-like structure of the operator $A$ which allows us in the third section to employ the techniques from linear algebra. Our first observation establishes the connection between $A$ and $K$.
\begin{lemma} Let $K: \S \times \S \rightarrow \C$ be a continuous kernel with $K(\xi,\zeta)=\overline{K(\zeta,\xi)}$. Then $K$ is positive definite if and only if $\langle A(y),y\rangle\geq 0$ for all $y\in Y$, where $A$ is as in \eqref{eq:DefOp}. 
\end{lemma}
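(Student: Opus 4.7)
The plan is to show that the two conditions in the lemma are, up to a direct translation, one and the same statement. Everything rests on the identity
\[
\langle A(y), y\rangle \;=\; \sum_{\xi,\zeta\in\Xi} c_\xi\,\overline{c_\zeta}\,K(\xi,\zeta)
\]
for $y_\ell = \sum_{\xi\in\Xi} c_\xi F_\ell(\xi)$, which is already set up in the paragraph immediately preceding the lemma. To derive it I plan to substitute the series \eqref{eq:Kernform} for $K$, interchange the finite $\xi,\zeta$-sums with the $\ell,\ell'$-sums (justified by $\sum|a_{\ell,\ell'}|^2<\infty$ together with boundedness of the $F_\ell$), recognise the inner double sum as $\overline{K(\xi,\zeta)}$, and then pass to $K(\zeta,\xi)$ via the assumed Hermiticity of $K$.

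For the ``$\Leftarrow$'' direction I will fix an arbitrary finite set of distinct points $\Xi\subset\S$ with coefficients $c_\xi\in\C$, set $y_\ell:=\sum_{\xi\in\Xi} c_\xi F_\ell(\xi)$, and note that $y\in Y$ by definition of the span. The identity then turns the standing assumption into $\sum_{\xi,\zeta}c_\xi\overline{c_\zeta}K(\xi,\zeta)\geq 0$, which (after one index swap and one further use of Hermiticity) is exactly the statement that the operator induced by $K_\Xi$ is positive on $\C^{|\Xi|}$ in the sense of Definition~\ref{DF:SPD}; self-adjointness of that matrix comes for free from Hermiticity of $K$. For ``$\Rightarrow$'' I pick $y\in Y$ and write it, by definition of the span, as a finite combination $\sum_{i=1}^{n} c_i (F_\ell(\xi_i))_\ell$; merging coefficients at coinciding points lets me assume the $\xi_i$ are pairwise distinct, and positive definiteness of $K$ applied to $(\Xi,c)=(\{\xi_i\},(c_i))$ gives $\langle A(y),y\rangle \geq 0$ through the same identity.

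The only subtlety, rather than a genuine obstacle, is that $A(y)$ need not belong to $\ell^2$ for $y\in Y$, so a priori $\langle A(y),y\rangle$ is not the $\ell^2$ inner product of two sequences. The excerpt already addresses this point: the identity above is effectively taken as the \emph{definition} of the pairing on $Y$, and its right-hand side is a manifestly finite complex number for any finite $\Xi$ and any coefficients. Once this is granted, the lemma reduces to the bookkeeping observation that elements of $Y$ parametrise exactly the quadratic forms appearing in Definition~\ref{DF:SPD}.
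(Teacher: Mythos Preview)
Your proposal is correct and follows essentially the same route as the paper: both directions hinge on the identity $\langle A(y),y\rangle=\sum_{\xi,\zeta\in\Xi}c_\xi\overline{c_\zeta}K(\xi,\zeta)$ for $y_\ell=\sum_{\xi}c_\xi F_\ell(\xi)$, obtained by inserting the expansion \eqref{eq:Kernform} and swapping the finite point-sums with the convergent series. Your extra care in merging coefficients at coinciding points to enforce distinctness of the $\xi_i$ is a detail the paper leaves implicit, and your use of Hermiticity to pass between $\overline{K(\xi,\zeta)}$ and $K(\zeta,\xi)$ is exactly what is needed to reconcile the two index conventions.
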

\begin{proof} Let   $\langle A(y),y\rangle\geq 0$ for all  $y \in Y$, then for arbitrary sets of distinct points $\Xi \subset \S$ and $c\in \C^{\vert \Xi \vert}$ 
\begin{align*}
\sum_{\xi \in \Xi} \sum_{\zeta \in \Xi}c_{\xi} \overline{c_{\zeta}} K(\xi, \zeta)&= \sum_{\xi \in \Xi} \sum_{\zeta\in \Xi}c_{\xi} \overline{c_{\zeta} }\sum_{\ell,\ell'=1}^{\infty}a_{\ell,\ell'}F_{\ell}(\xi)\overline{F_{\ell'}(\zeta)} \\
& = \sum_{\ell,\ell'=1}^{\infty}a_{\ell,\ell'} \sum_{\xi \in \Xi}c_{\xi}F_{\ell}(\xi)\sum_{\zeta\in \Xi} \overline{c_{\zeta}F_{\ell'}(\zeta)}  \\
&=\sum_{\ell,\ell'=1}^{\infty}y_{\ell} a_{\ell, \ell'}\overline{y_{\ell'}}=\langle A(y),y\rangle \geq 0, \\
&\text{with }   y =\left(\sum_{\xi \in \Xi}c_{\xi}F_{1}(\xi), \sum_{\xi \in \Xi}c_{\xi}F_{2}(\xi),\ldots \right) \in Y.
\end{align*}
The order of summation in the second line can be exchanged because the series converge for all $\xi$ since $K$ is continuous.

To prove the opposite direction we assume there exists $y \in Y$ with $\langle A(y),y \rangle <0.$
Then there exists a finite set of points $\Xi \subset \S$ such that 
$$y=\left(\sum_{\xi \in \Xi}c_{\xi}F_{1}(\xi), \sum_{\xi \in \Xi}c_{\xi}F_{2}(\xi),\ldots \right)$$  
and
$$ \sum_{\xi \in \Xi} \sum_{\zeta\in \Xi}c_{\xi} \overline{c_{\zeta} }K(\xi, \zeta) =\sum_{\ell=1}^{\infty} \sum_{\ell'=1}^{\infty}y_{\ell}a_{\ell, \ell'}\overline{y_{\ell'}}< 0.$$
\end{proof}
Now we can prove the following  characterisation of positive definiteness which is more convenient since it only requires properties of $A$ as an operator on the Hilbert space $\ell^2$. The result could  be easily deduced from Theorem 2.1 of \citep{Dyn1997} but we give a self-contained proof to be able to use the  methodology again. 
\begin{thm}
Let $K: \S \times \S \rightarrow \C$ be a continuous kernel with $K(\xi,\zeta)=\overline{K(\zeta,\xi)}$. Then $K$ is positive definite if and only if $A$ is a positive operator on $\ell^2$, where $A$ is as in \eqref{eq:DefOp}. 
\end{thm}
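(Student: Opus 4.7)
The plan is to use the previous lemma as a bridge and extend the characterisation from the space $Y$ to all of $\ell^2$. Note that $Y$ and $\ell^2$ are not comparable: typical elements $(F_\ell(\xi))_\ell$ of $Y$ are not square-summable, while $\ell^2$ contains finitely supported sequences that need not lie in $Y$. So a simple density argument is not available; the bridge will instead be the integral expansion \eqref{eq:Kernform} together with approximation by Riemann sums.

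For the implication ``$A$ positive on $\ell^2$ $\Rightarrow$ $K$ positive definite'', I would fix $y\in Y$ and truncate to $y^{(N)}:=(y_1,\dots,y_N,0,0,\dots)\in\ell^2$. The hypothesis gives
\[
0\le\langle A(y^{(N)}),y^{(N)}\rangle=\sum_{\ell,\ell'=1}^{N}y_\ell\, a_{\ell,\ell'}\,\overline{y_{\ell'}},
\]
and the rearrangement already justified in the proof of the previous lemma (using continuity of $K$) shows that the right-hand side converges to $\langle A(y),y\rangle=\sum_{\xi,\zeta\in\Xi}c_\xi\overline{c_\zeta}K(\xi,\zeta)$ as $N\to\infty$, where $y_\ell=\sum_{\xi\in\Xi}c_\xi F_\ell(\xi)$. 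Hence $\langle A(y),y\rangle\ge 0$ for every $y\in Y$, and the previous lemma yields positive definiteness.

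For the converse, self-adjointness of $A$ is immediate: extracting coefficients by orthonormality in \eqref{eq:Kernform} and using $K(\xi,\zeta)=\overline{K(\zeta,\xi)}$ yields $a_{\ell,\ell'}=\overline{a_{\ell',\ell}}$. For non-negativity of the quadratic form, I would first handle finitely supported $y\in\ell^2$ and then pass to arbitrary $y\in\ell^2$ using boundedness of $A$ together with truncation. For finitely supported $y$, set $g(\xi):=\sum_\ell y_\ell\,\overline{F_\ell(\xi)}$, a continuous trigonometric polynomial. A short computation using orthonormality shows
\[
\iint_{\S\times\S} g(\xi)\,\overline{g(\zeta)}\,K(\xi,\zeta)\,d\xi\,d\zeta \;=\; \sum_{\ell,\ell'}a_{\ell,\ell'}\,y_\ell\,\overline{y_{\ell'}} \;=\; \langle A(y),y\rangle.
\]
Since $g$ and $K$ are continuous on the compact set $\S\times\S$, the integral is the limit of Riemann sums $\sum_{j,k}g(\xi_j)\overline{g(\xi_k)}K(\xi_j,\xi_k)|R_j||R_k|$; setting $c_j:=g(\xi_j)|R_j|$ realises each such sum as $\sum_{j,k}c_j\overline{c_k}K(\xi_j,\xi_k)$, which is non-negative by the assumed positive definiteness of $K$ once the $\xi_j$ are chosen distinct. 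Passing to the limit gives $\langle A(y),y\rangle\ge 0$ for every finitely supported $y$, and continuity of the quadratic form of the bounded operator $A$ extends the inequality to all of $\ell^2$.

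The step I expect to be the main obstacle is the integral representation of $\langle A(y),y\rangle$ together with the Riemann-sum approximation: this is where the discrete condition ``positive definite on distinct finite point sets'' is transferred to a genuinely infinite-dimensional statement on $\ell^2$. Everything else reduces to the orthonormal expansion \eqref{eq:Kernform}, the uniform continuity furnished by compactness of $\S\times\S$, and the boundedness of $A$.
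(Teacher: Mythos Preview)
Your argument is correct and, in both directions, takes a more elementary route than the paper's.

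For the implication ``$A$ positive on $\ell^2$ $\Rightarrow$ $K$ positive definite'' you simply truncate $y\in Y$ to $y^{(N)}\in\ell^2$ and pass to the limit in the square partial sums, invoking exactly the pointwise convergence of \eqref{eq:Kernform} that Lemma~1 already uses. The paper instead argues by contradiction and constructs $\ell^2$-approximants of $y\in Y$ via a heat-kernel damping $y^n_\ell=e^{-\lambda_\ell/n}y_\ell$; this yields an auxiliary smoothed kernel $K_n\in W_2^2(\S\times\S)$ and then shows $K_n\to K$. Your truncation avoids this machinery entirely, at no cost in rigor relative to the paper's standing convergence assumption on \eqref{eq:Kernform}.

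For ``$K$ positive definite $\Rightarrow$ $A$ positive on $\ell^2$'' the paper again works by contradiction: starting from a general $y\in\ell^2$ with $\langle A(y),y\rangle<0$, it forms $f=\sum y_\ell F_\ell\in L^2(\S)$, approximates $f$ in $L^2$ by $C^\infty$ functions, and then discretises the resulting double integral with a product Gauss--Legendre rule. Your two-step reduction---first to finitely supported $y$ using boundedness of $A$, so that $g$ is a genuine spherical polynomial, then Riemann sums on the continuous integrand---achieves the same bridge from the integral to finite quadratic forms without the intermediate $C^\infty$ approximation. The two discretisations (generic Riemann sums versus spherical Gaussian quadrature) play identical roles; the paper's choice is merely more explicit about distinctness of the nodes, which you handle by remarking that sample points in a Riemann partition can be taken distinct. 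Overall your proof is a streamlined variant: same integral-to-quadratic-form mechanism, but with the analytic overhead (heat kernel, smooth approximation) replaced by truncation in $\ell^2$ at both ends.
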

\begin{proof}
We prove the first direction by contradiction.
We assume there is a $y\in \ell^2$ with $\sum_{\ell=1}^{\infty} \sum_{\ell'=1}^{\infty}y_{\ell}a_{\ell, \ell'}\overline{y_{\ell'}}< 0$ and deduce that $K$ is not positive definite. 

Assume $y\in \ell^2$ with $\langle A(y),y\rangle <0$, then we know  the function $f $ with 
$$f(\xi):=\sum_{\ell=1}^{\infty}y_{\ell}F_{\ell}(\xi)\in L^2(\S)$$ and 
$$ \langle A(y),y\rangle =\int_{\S} \int_{\S} \overline{f}(\zeta)K(\xi,\zeta)f(\xi)\,d\xi\,d\zeta= \langle \bar{f}(\xi)f(\zeta),K(\xi,\zeta) \rangle_{L^2(\S\times \S)}<0.$$

Since every function in $L^2(\S)$ can be approximated arbitrarily well using functions in $C^{\infty}(\S)$ there exists a sequence of  functions $\tilde{f}_n\in C^{\infty}(\S)$ with 
$$\underset{n\rightarrow \infty}{\lim} \Vert f-\tilde{f}_n\Vert_{L^2(\S)}=0.$$
It follows that also $\underset{n\rightarrow \infty}{\lim} \Vert \bar{f}-\bar{\tilde{f}}_n\Vert_{L^2(\S)}=0$ and 
$$\underset{n\rightarrow \infty}{\lim} \Vert f(\xi)\bar{f}(\zeta)-\tilde{f}_n(\xi)\bar{\tilde{f}}_n(\zeta)\Vert_{L^2(\S\times \S)}=0.$$
Using Cauchy-Schwarz inequality on the space $L^2(\S\times \S)$ we deduce that
$$ \underset{n\rightarrow \infty}{\lim} \langle \bar{\tilde{f}}_n(\xi)\tilde{f}_n(\zeta),K(\xi,\zeta) \rangle_{L^2(\S\times \S)}= \langle \bar{f}(\xi)\tilde{f}(\zeta),K(\xi,\zeta) \rangle_{L^2(\S\times \S)}<0.$$
 Therefore there  exists a value $n\in\N$ for which  
 $$\int_{\S} \int_{\S} \bar{\tilde{f}}_n(\zeta)K(\xi,\zeta)\tilde{f}_n(\xi)\,d\xi\, d\zeta<0.$$
 From now we assume $n$ to be a fixed integer for which the above inequality holds.

The next step is the discretisation of  the above integral of a product of continuous functions on $\S\times \S$. We will use a tensor product of the Gaussian quadrature rule for the sphere but any quadrature rule that converges for continuous functions when the number of discretisation points goes to infinity would be applicable.   

We construct the  sequence $y_m$ using the product Gaussian quadrature rule,
$$I_m=\frac{\pi}{m} \sum_{j=1}^{2m} \sum_{i=1}^m w_i f(\xi_{i,j}).$$
Here $\xi_{i,j}$ given in polar coordinates is $(\pi i/m, \theta_{i,m})$, where $\cos(\theta_{i,m})$ and $w_i$ are the Gaussian Legendre quadrature points and weights on $[-1,1]$. The sum from the quadrature rule converges to $\int_{\S}f(\xi)\,d\xi$ for $m\rightarrow \infty$ and all continuous $f$ on $\S$.
As tensor product extension we  know that 
$$\underset{m\rightarrow \infty}{\lim}\frac{\pi^2}{m^2} \sum_{j,j'=1}^{2m} \sum_{i,i'=1}^m w_i w_{i'} f(x_{i,j},x_{i',j'})=   \int_{\S\times \S}f(\xi,\zeta)\, d\xi \, d\zeta, \qquad \forall f\in C(\S\times\S).$$

If we choose 
$$\Xi_m=\left\lbrace \xi_{i,j} \vert\ j=1,\ldots ,2m,\  i =1,\ldots,m\right\rbrace \text{ and }c_{\xi_{i,j}}=w_i \tilde{f}_n(\xi_{i,j}) $$
then $y_m=   \sum_{\xi \in \Xi_m} c_{\xi} \left( F_1 (\xi), F_2(\xi), \ldots\right)$ converges elementwise to the sequence of expansion coefficients of $\tilde{f}_n$. Now 
\begin{align*}
 \underset{m\rightarrow \infty}{\lim}\langle A(y_m),y_m \rangle &= \underset{m\rightarrow \infty}{\lim} \sum_{\xi,\zeta\in \Xi_m} c_{\xi} \overline{c_{\zeta}}K(\xi,\zeta)\\
 &=\int_{\S} \int_{\S} \overline{\tilde{f}_n}(\zeta)K(\xi,\zeta)\tilde{f}_n(\xi)\, d\xi \, d\zeta<0.
\end{align*}
This completes the proof of the first direction since it is a contradiction to $K$ being positive definite.

It remains to show that if $A$ is positive on $\ell^2$ then $K$ is positive definite. We again prove the claim using contradiction, by assuming there is a set of points $\Xi$ and coefficients $c\in \C^{\vert \Xi \vert}$ for which 
$$\sum_{\xi,\zeta \in \Xi} c_{\xi}\overline{c_{\zeta}}K(\xi,\zeta)<0$$ 
and prove that in this case $A$ is not a positive operator on $\ell^2$. 

We note that according to Lemma 1 the assumption is equivalent to $\langle A(y),y\rangle<0$  with $y=\sum_{\xi \in \Xi} c_{\xi} \left( F_1 (\xi), F_2(\xi), \ldots\right)$.

We now construct a sequence of sequences  $y^n \in \ell^2$ for which $\underset{n\rightarrow \infty}{\lim} \langle A(y^n),y^n\rangle= \langle A(y),y\rangle$.  The tool we use is the heat kernel.

We start with defining the sequence $y^n=\left( e^{-\lambda_{\ell}\frac{1}{n}}y_{\ell}\right)_{\ell=1}^{\infty}$, which converges elementwise to $y$ for $n\rightarrow \infty$. Now we use the addition theorem for spherical harmonics, which states that for any orthonormal basis of the eigenfunctions corresponding to the eigenvalue  $k(k+1)$:
$$\Big\vert \sum_{\ell\text{ with }\lambda_{\ell}=k(k+1)}F_{\ell}(\xi)\overline{F_{\ell}}(\zeta)\Big\vert \leq \frac{2k+1}{4\pi}, \qquad \forall \xi,\zeta \in \Xi.$$
From this we deduce that the $\ell^2$-norm  of $y^n_{\xi}=\left( e^{-\lambda_{\ell}\frac{1}{n}}F_{\ell}{(\xi)}\right)_{\ell=1}^{\infty}$ for arbitrary $\xi \in \S$ can be estimated by 
\begin{equation}\label{eq:estNorm} \Vert y^n_{\xi}\Vert_2^2\leq \sum_{k=0}^{\infty} e^{-2k(k+1)\frac{1}{n}}\left(\frac{2k+1}{4\pi}\right)^2<\infty.
\end{equation}
Accordingly $y^n$ is also in $\ell^2$ as a linear combination $\sum_{\xi\in \Xi}c_{\xi}y_{\xi}^n$.
Now 
\begin{align*}\underset{n\rightarrow \infty}{\lim} \langle A(y^n),y^n\rangle=&\underset{n\rightarrow \infty}{\lim}\sum_{\ell,\ell'=1}^{\infty}y^n_{\ell}a_{\ell,\ell'}\overline{y^n_{\ell'}}\\
&= \underset{n\rightarrow \infty}{\lim}\sum_{\ell,\ell'=1}^{\infty}y_{\ell} \underset{:=a^n_{\ell,\ell'}}{\underbrace{e^{-\lambda_{\ell}\frac{1}{n}}a_{\ell,\ell'} e^{-\lambda_{\ell'}\frac{1}{n}}}}\overline{y_{\ell'}}.
\end{align*}
We define $K_n(\xi,\zeta):=\sum_{\ell,\ell'=1}^{\infty} a^n_{\ell,\ell'}F_{\ell}(\xi)\overline{F_{\ell'}}(\zeta)$ and use the arguments  used in \eqref{eq:estNorm} to show that $K_n\in W^2(\S\times \S) \subset C(\S \times \S)$.  It follows that
\begin{align*} \underset{n\rightarrow \infty}{\lim} \langle A(y^n),y^n\rangle=\underset{n\rightarrow \infty}{\lim} \sum_{\xi ,\zeta\in \Xi}c_{\xi}\overline{c_{\zeta}}K_n(\xi,\zeta).
\end{align*}
It remains to prove that $K_n$ converges pointwise to $K$. This is true since
$$\underset{n\rightarrow \infty}{\lim} \Vert K_n-K \Vert^2_{L^2(\S\times\S)}= \underset{n\rightarrow \infty}{\lim} \sum_{\ell,\ell'=1}^{\infty} \left( 1-e^{-\frac{1}{n}(\lambda_{\ell}+\lambda_{\ell'})}\right)^2a_{\ell,\ell'}^2=0.$$
The convergence in $L^2$- norm implies pointwise convergence since both kernels are continuous and we finally have 
\begin{align*} \underset{n\rightarrow \infty}{\lim} \langle A(y^n),y^n\rangle= \underset{n\rightarrow \infty}{\lim}\sum_{\xi, \zeta\in \Xi}c_{\xi}\overline{c_{\zeta}}K_n(\xi,\zeta)=\sum_{\xi, \zeta\in \Xi}c_{\xi}\overline{c_{\zeta}}K(\xi,\zeta)<0.
\end{align*}


\end{proof}
To solve the interpolation problem positive definiteness is not sufficient and we have to investigate strict positive definiteness. Analogue to Lemma 1 we can easily deduce:
\begin{thm}
Let $K: \S \times \S \rightarrow \C$ be a continuous kernel with $K(\xi,\zeta)=\overline{K(\zeta,\xi)}$. Then $K$ is strictly positive definite if and only if $\langle A(y),y\rangle >0$ for all  $y \in  Y\setminus \lbrace 0 \rbrace $, where $A$ is as in \eqref{eq:DefOp}. 
\end{thm}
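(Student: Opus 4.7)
My plan is to prove both directions of the equivalence by mirroring the structure of Lemma 1, with the crucial extra input being that, for a fixed finite set $\Xi \subset \S$ of distinct points, the assignment $c \mapsto y=\big(\sum_{\xi \in \Xi} c_\xi F_\ell(\xi)\big)_{\ell=1}^{\infty}$ from $\C^{\vert \Xi \vert}$ into $Y$ is injective.

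For the direction ``$\langle A(y),y\rangle > 0$ for every $y \in Y\setminus \lbrace 0 \rbrace$ implies $K$ strictly positive definite'', I would take an arbitrary finite set $\Xi \subset \S$ of distinct points and a nonzero $c \in \C^{\vert \Xi \vert}$, form the associated $y \in Y$, and reuse the computation from Lemma 1 verbatim to identify $\sum_{\xi,\zeta \in \Xi} c_\xi \overline{c_\zeta} K(\xi,\zeta)$ with $\langle A(y),y\rangle$. The only step that does not appear in Lemma 1 is verifying $y \neq 0$. If every component of $y$ vanished, the linear functional $g \mapsto \sum_{\xi\in\Xi} c_\xi g(\xi)$ would annihilate every $F_\ell$, hence every element of the linear span of $\lbrace F_\ell \rbrace$, which is uniformly dense in $C(\S)$; so the functional would vanish on every continuous function. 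Since the points of $\Xi$ are distinct and $\S$ is a metric space, for each $\xi_0 \in \Xi$ one can construct a continuous test function that equals $1$ at $\xi_0$ and $0$ on $\Xi \setminus \lbrace \xi_0 \rbrace$; plugging such functions into the functional forces every $c_{\xi_0}$ to vanish, contradicting $c \neq 0$. Strict positivity of $\langle A(y),y\rangle$ then gives strict positive definiteness of $K$.

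For the reverse direction, I take $y \in Y \setminus \lbrace 0 \rbrace$ and, by the definition of $Y$, write it as $y = \big(\sum_{\xi\in\Xi} c_\xi F_\ell(\xi)\big)_{\ell=1}^{\infty}$ for some finite set $\Xi$ of distinct points and coefficients $c \in \C^{\vert \Xi \vert}$. Since $y \neq 0$, the vector $c$ cannot be zero (otherwise every component of $y$ would be zero). Strict positive definiteness of $K$ now supplies $\sum_{\xi,\zeta\in\Xi} c_\xi \overline{c_\zeta} K(\xi,\zeta) > 0$, and this again equals $\langle A(y),y\rangle$ by the calculation used in Lemma 1.

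The main obstacle, modest as it is, lies solely in the injectivity of the map $c \mapsto y$: it is exactly this step, handled by the bump-function/Stone--Weierstrass argument above, that upgrades the non-strict equivalence of Lemma 1 to a strict one. Everything else is a direct transcription of the Lemma 1 argument with ``$\geq$'' replaced by ``$>$''.
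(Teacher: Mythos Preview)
Your proposal is correct and follows essentially the same approach as the paper: both directions reduce to the Lemma~1 computation together with the injectivity of the map $c \mapsto y$ for distinct points. Your injectivity argument via uniform density of spherical harmonics in $C(\S)$ and continuous bump functions is in fact slightly cleaner than the paper's version, which invokes the $L^2(\S)$ basis property and ``linear independence of evaluation functionals on $L^2$'' (a formulation that is a bit loose since point evaluations are not well-defined on $L^2$).
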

\begin{proof}
If we assume that $K$ is strictly positive definite it will also be positive definite and thus $\langle A(y),y\rangle \geq0$ for all $y \in Y$ according to Lemma 1. Also for all elements $y\in Y \setminus \lbrace 0\rbrace$ there exists  a set $\Xi$ of distinct points and coefficients $c_{\xi}$ not all zero s.t. $y =\left(\sum_{\xi \in \Xi}c_{\xi}F_{1}(\xi), \sum_{\xi \in \Xi}c_{\xi}F_{2}(\xi),\ldots \right)$.
Then 
\begin{align*}
\sum_{\xi \in \Xi} \sum_{\zeta \in \Xi}c_{\xi} \overline{c_{\zeta}} K(\xi, \zeta)=\sum_{\ell,\ell'=1}^{\infty}y_{\ell} a_{\ell, \ell'}\overline{y_{\ell'}}=\langle A(y),y\rangle >0,
\end{align*}
as long as $y\neq0$.

For the second direction we assume $\langle A( y) ,y\rangle > 0$ for all  $y \in  Y\setminus \lbrace 0 \rbrace$. Then $K$ is positive definite as a result of Lemma 1. Additionally we can show that for a finite non empty set of distinct points $\Xi$
$$y=\left(\sum_{\xi \in \Xi}c_{\xi}F_{1}(\xi), \sum_{\xi \in \Xi}c_{\xi}F_{2}(\xi),\ldots \right)= 0$$   if and only if  $c=0\in \C^{\vert \Xi \vert}$, because the $F_{\ell}$ form a basis of $L^2(\S)$ and 
$$\sum_{\xi \in \Xi}c_{\xi}F_{\ell}(\xi)=0,\qquad  \forall \ell \in \N,$$
would implicate   $\sum_{\xi \in \Xi}c_{\xi}f(\xi)=0$ for all $f\in L_2(\S)$, but the evaluation functionals on $L^2(\Omega)$ are linearly independent.
Now 
$$\sum_{\xi \in \Xi} \sum_{\zeta \in \Xi}c_{\xi} \overline{c_{\zeta}} K(\xi, \zeta)=\langle A(y),y \rangle>0,$$
 as long as $c\neq0\in \C^{\vert \Xi \vert}$.
\end{proof}
The analogue of Theorem 2 does not hold for strict positive definiteness as follows from the characterisation of strictly positive definite isotropic kernels by Chen, Menegatto and Sun \citep{Chen2003}. 

\section{Kernels with special coefficient structure}

For the rest of the paper, we change to the double index notation of the spherical harmonics, which allows us to employ more of the specific properties of the eigenfunctions but comes at the cost of a more complicated notation. Our kernels which are still Hermitian and continuous are now represented as:
 \begin{equation}\label{eq:GenKer} K(\xi, \zeta)=\sum_{j,j'=0}^{\infty}\sum_{k=-j}^{j} \sum_{k'=-j'}^{j'} \alpha_{j,j',k,k'}Y_j^k(\xi)\overline{Y_{j'}^{k'}(\zeta)},\qquad \forall \xi,\zeta \in \S ,
 \end{equation}
where 
$$Y_j^k(\theta,\varphi)= \frac{1}{\sqrt{2\pi}} \sqrt{\frac{2j+1}{2} \frac{(j-k)!}{(j+k)!}} P_{j,k}(\cos(\theta))e^{ik \varphi}$$
 and $P_{j,k}$ are the associated Legendre polynomials.
Imposing certain conditions on the structure of the coefficients allows us to focus on kernels with specific properties. The most often studied case is assuming that the kernel is isotropic \citep{Schoenberg1938}, which means it only depends on the distance of its two arguments and not on their position on the sphere. 
The coefficients of an isotropic positive definite kernel given in the form  \eqref{eq:GenKer} satisfy
$$\alpha_{j,j',k,k'}=c_j \delta_{j,'j} \delta_{k,k'}\geq 0$$ 
as stated in \citep{Schoenberg1938}. A characterisation of the strictly positive definite kernels of this form was presented by Chen et al.\ in \citep{Chen2003}. 

Recently a sufficient condition  for axially symmetric kernels  to be strictly positive definite was presented in \citep{Bissiri2020}. The axially symmetric kernels do depend on the difference in longitude of the two inputs  $\xi,\zeta$ and their individual values of latitude.  The coefficients of these kernels satisfy $$\alpha_{j,j',k,k'}=c_k(j,j')\delta_{k,k'}.$$

In this publication, we add necessary conditions and sufficient conditions for a new class of kernels, which includes the results stated for isotropic kernels as a special case.
\begin{lemma}
A  kernel $K\in C^2(\S\times\S)$ satisfies  
$$\triangle_{\S,1}K(\xi,\zeta)=\triangle_{\S,2}K(\xi,\zeta),\qquad \forall \xi,\zeta \in \S,$$ 
where $\triangle_{\S,1}$/ $\triangle_{\S,2}K(\xi,\zeta)$ means that the Laplace-Beltrami operator is applied to the function with respect to the first or second argument, if and only if 
$$\alpha_{j,j',k,k'}=d_j(k,k')\delta_{j,j'}.$$
\end{lemma}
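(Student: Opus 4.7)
The plan is to exploit the spectral fact that each spherical harmonic $Y_j^k$ is an eigenfunction of $\triangle_\S$ whose eigenvalue $\lambda_j$ depends only on the degree $j$, not on the order $k$, and that $j \mapsto \lambda_j$ is strictly monotonic for $j\geq 0$. Applying $\triangle_{\S,1}$ to the expansion \eqref{eq:GenKer} will multiply each coefficient $\alpha_{j,j',k,k'}$ by $\lambda_j$, while $\triangle_{\S,2}$ multiplies it by $\lambda_{j'}$. The stated identity $\triangle_{\S,1}K=\triangle_{\S,2}K$ should therefore be equivalent to $(\lambda_j-\lambda_{j'})\alpha_{j,j',k,k'}=0$ for all indices, i.e.\ to the vanishing of $\alpha_{j,j',k,k'}$ whenever $j\neq j'$, which is exactly the claimed block structure $\alpha_{j,j',k,k'}=d_j(k,k')\delta_{j,j'}$.

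For the ``if'' direction I would simply substitute the assumed block form into \eqref{eq:GenKer}: only the diagonal $j=j'$ terms survive, and for these the two eigenvalue factors $\lambda_j$ and $\lambda_{j'}$ coincide, so $\triangle_{\S,1}K$ and $\triangle_{\S,2}K$ have identical tensor spherical-harmonic expansions. Since both sides are continuous on $\S\times\S$ by the $C^2$ hypothesis, uniqueness of the $L^2(\S\times\S)$ expansion in the tensor basis $\{Y_j^k(\xi)\overline{Y_{j'}^{k'}(\zeta)}\}_{j,j',k,k'}$ promotes this to the required pointwise equality.

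For the ``only if'' direction my strategy is to compare tensor-product Fourier coefficients. Assuming $\triangle_{\S,1}K=\triangle_{\S,2}K$ on $\S\times\S$, I would show that the $(j,j',k,k')$-coefficient of the left hand side equals $\lambda_j\alpha_{j,j',k,k'}$ while that of the right hand side equals $\lambda_{j'}\alpha_{j,j',k,k'}$. Uniqueness of the $L^2$ expansion of the continuous function on either side then forces $(\lambda_j-\lambda_{j'})\alpha_{j,j',k,k'}=0$ for every $(j,j',k,k')$, and strict monotonicity of $\lambda_j$ in $j$ yields $\alpha_{j,j',k,k'}=0$ whenever $j\neq j'$. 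Setting $d_j(k,k'):=\alpha_{j,j,k,k'}$ then gives the stated form.

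The main obstacle I anticipate is justifying this coefficient identity, since mere $C^2$ regularity is too weak to differentiate the series \eqref{eq:GenKer} term by term in a pointwise sense. I would circumvent this by moving the Laplacian off $K$ via self-adjointness: because $\S$ has no boundary, Green's identity yields
$$\int_\S (\triangle_\S g)(\xi)\,\overline{Y_j^k(\xi)}\, d\xi \;=\; \lambda_j \int_\S g(\xi)\, \overline{Y_j^k(\xi)}\, d\xi$$
for every $g\in C^2(\S)$. Applying this in the first variable to the slice $\xi\mapsto K(\xi,\zeta)$, then integrating against $Y_{j'}^{k'}(\zeta)$ and invoking Fubini together with orthonormality of the $Y_{j'}^{k'}$, reproduces the coefficient $\lambda_j\alpha_{j,j',k,k'}$ of $\triangle_{\S,1}K$ without ever differentiating the series; the symmetric manoeuvre in the second variable handles $\triangle_{\S,2}K$ and closes the argument.
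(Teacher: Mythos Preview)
Your proposal is correct and follows essentially the same route as the paper: compare the tensor spherical-harmonic coefficients of $\triangle_{\S,1}K$ and $\triangle_{\S,2}K$, obtain $(\lambda_j-\lambda_{j'})\alpha_{j,j',k,k'}=0$, and invoke injectivity of $j\mapsto\lambda_j$. Your use of Green's identity to extract the coefficients is in fact more careful than the paper, which simply differentiates the expansion \eqref{eq:GenKer} term by term without further justification.
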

\begin{proof}
The spherical Laplacian applied to the kernel yields:
\begin{align*}\triangle_{\S,1}K(\xi,\zeta)&=\sum_{j,j'=0}^{\infty}\lambda_j \sum_{k=-j}^j \sum_{k'=-j'}^{j'}a_{j,j',k,k'} Y_{j}^k(\xi)\overline{Y_{j'}^{k'}(\zeta)},\\
\triangle_{\S,2}K(\xi,\zeta)&=\sum_{j,j'=0}^{\infty}\lambda_{j'} \sum_{k=-j}^j \sum_{k'=-j'}^{j'}a_{j,j',k,k'} Y_{j}^k(\xi)\overline{Y_{j'}^{k'}(\zeta)}.
\end{align*}
Since both $\overline{Y_{j'}^{k'}}$ as well as $Y_j^k$ form a set of linear independent functions and we assume equality of the two equations above  for all $\xi,\zeta\in\S$, all the expansion coefficients in the above equation need to be equal. Additionally except $\lambda_0$ all $\lambda_j $ are positive and distinct, therefore  
$$\lambda_j a_{j,j',k,k'}=\lambda_{j'}a_{j,j',k,k'},\qquad \forall j,j' \in \N,$$
is achieved if and only if  
$$\alpha_{j,j',k,k'}=d_j(k,k')\delta_{j,j'}.$$
\end{proof}
Kernels of this form are also invariant under parity, meaning $K(\xi,\zeta)=K(-\xi,-\zeta)$ and the special structure allows to determine easily if an interpolant derived using such a kernel is included in certain  Sobolev spaces, this is the case even if we do not impose the conditions of twice differentiability and real range.
\subsection{(Strict) positive definiteness of kernels with eigenvalue block structure}
We now assume that $K$ is continuous Hermitian and has the form  
\begin{equation}\label{eq:Kernform2} K(\xi, \zeta)=\sum_{j=0}^{\infty}\sum_{k=-j}^{j} \sum_{k'=-j}^j d_j(k,k') Y_j^k(\xi)\overline{Y_{j'}^{k'}(\zeta)},\qquad \forall \xi,\zeta \in \S .
 \end{equation}
 We now define the linear operators 
 \begin{align}\label{eq:Dj}
 D_j&: \C^{2j+1} \rightarrow \C^{2j+1},\qquad x\mapsto \left(\sum_{k=1}^{2j+1}(d_j(k-j-1,k'-j-1))x_k\right)_{k'=1}^{2j+1},
 \end{align}
 on $\C^{2j+1}$.
\begin{thm}A continuous and Hermitian kernel $K$ of the form \eqref{eq:Kernform2} is positive definite if and only if the operator  $D_j$ is a positive operator, for all $j\in \N$. 
\end{thm}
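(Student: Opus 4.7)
The plan is to reduce the statement to Theorem 2 and then exploit the Kronecker delta $\delta_{j,j'}$ in the expansion coefficients to decompose the operator $A$ of \eqref{eq:DefOp} as an orthogonal direct sum of the finite-dimensional blocks $D_j$. Once that decomposition is available, positivity of $A$ on $\ell^2$ is equivalent to simultaneous positivity of every $D_j$ on $\mathbb{C}^{2j+1}$ essentially by inspection.

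First I would fix the double-index bookkeeping from Section 3: identify $\ell^2$ (indexed by pairs $(j,k)$ with $-j\leq k\leq j$) with the orthogonal Hilbert-space direct sum $\bigoplus_{j=0}^{\infty}\mathbb{C}^{2j+1}$, writing $y\in\ell^2$ as $y=(y^{(j)})_{j\geq 0}$ with $y^{(j)}\in\mathbb{C}^{2j+1}$. Using the assumption $\alpha_{j,j',k,k'}=d_j(k,k')\delta_{j,j'}$, a direct computation gives
$$\langle A(y),y\rangle=\sum_{j=0}^{\infty}\sum_{k,k'=-j}^{j}\overline{y^{(j)}_{k'}}\,d_j(k,k')\,y^{(j)}_{k}=\sum_{j=0}^{\infty}\langle D_j(y^{(j)}),y^{(j)}\rangle,$$
where interchange of summations is justified by the already established square-summability $\sum_{j,j',k,k'}|\alpha_{j,j',k,k'}|^2<\infty$ together with Cauchy--Schwarz. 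I would also observe that Hermiticity of $K$ forces $d_j(k,k')=\overline{d_j(k',k)}$, so each $D_j$ is automatically self-adjoint; this is needed because the paper's definition of positive operator includes self-adjointness.

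For the forward direction, assume $K$ is positive definite, so by Theorem 2 the operator $A$ is positive on $\ell^2$. Given any $j\in\mathbb{N}$ and any $x\in\mathbb{C}^{2j+1}$, take $y\in\ell^2$ with $y^{(j)}=x$ and $y^{(j')}=0$ for $j'\neq j$. The direct-sum formula above collapses to a single term and yields $\langle D_j(x),x\rangle=\langle A(y),y\rangle\geq 0$, so $D_j$ is a positive operator. Conversely, if every $D_j$ is positive, then for arbitrary $y\in\ell^2$ every summand in the direct-sum expression is nonnegative, hence $\langle A(y),y\rangle\geq 0$; together with self-adjointness of $A$ (which is inherited from the $D_j$ since $A$ is bounded and block-diagonal) this makes $A$ positive, and a final appeal to Theorem 2 gives that $K$ is positive definite.

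The main obstacle is really just bookkeeping rather than analysis: one must be precise about identifying the abstract indexing in \eqref{eq:DefOp} with the double-index enumeration of \eqref{eq:Kernform2}, and must justify the termwise manipulation $\langle A(y),y\rangle=\sum_{j}\langle D_j(y^{(j)}),y^{(j)}\rangle$ for genuine $\ell^2$ vectors (not only for finitely supported ones). The latter is the only place where the argument needs a small estimate, and it is handled by the square-summability of the $\alpha_{j,j',k,k'}$ inherited from the continuity assumption on $K$; no new technique beyond what is already used in the proof of Theorem 2 is required.
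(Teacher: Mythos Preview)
Your proposal is correct and follows essentially the same route as the paper: re-index so that $A$ is block-diagonal with blocks $D_j$, express $\langle A(y),y\rangle=\sum_{j}\langle D_j(y^{(j)}),y^{(j)}\rangle$, and then invoke the equivalence ``$K$ positive definite $\Leftrightarrow$ $A$ positive on $\ell^2$'' to conclude. The only slip is a label: the result you quote as ``Theorem~2'' (that $K$ is positive definite iff $A$ is positive on $\ell^2$) is Theorem~1 in this paper; Theorem~2 here is the strict-positive-definite analogue on $Y\setminus\{0\}$, so make sure you cite the right one.
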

\begin{proof}
To employ the results of Section 2  we identify  the double indexed series with the elements $F_{\ell}$ by setting: 
$$F_1(\xi)=Y_ 0^0(\xi), \qquad F_{j^2+j+1+k}(\xi)=Y_j^k(\xi)$$
and $$a_{j^2+j+1+k,j^2+j+1+k'}=d_j(k,k')\delta_{j,j'}.$$
The operator $A$ from \eqref{eq:DefOp} thereby has the form  
\begin{align*} \langle A(y),y \rangle &=\sum_{j=0}^{\infty}\left( \sum_{k=-j}^j \sum_{k'=-j}^{j} y_{j^2+j+1+k}d_j(k,k') \overline{y_{j^2+j+1+k'}}\right) \\
&=\sum_{j=0}^{\infty}\langle D_j(y^j),y^j\rangle_{\C^{2j+1}}, \qquad y^j=\left(y_{j^2+1},\ldots,y_{j^2+2j+1}\right) \in \C^{2j+1}.
\end{align*}
From Theorem 1 we deduce that $K$ is positive definite if and only if $A$ is positive on $\ell^2$. This is the case if and only if the operators $D_j$ are positive on $\C^{2j+1}$.
\end{proof}
We note that since the $D_j$ are linear operators on finite dimensional spaces, they can be represented as matrices $D_j=\left(d_j(k,k')\right)_{k,k'=-j}^j$. We will now use the notation $D_j$ for the matrix as well as for the operator. $D_j$ is a positive operator if and only if the matrix $D_j$ is positive semi-definite and $D_j$ is not the zero operator if and only if  $D_j$ is not the zero matrix.
\begin{lemma} If a continuous and Hermitian kernel $K$ of the form \eqref{eq:Kernform2}
 is strictly positive definite, the operators $D_j$ are all positive and $D_j$ is not the zero operator for infinitely many even and infinitely many odd values of $j\in \N$.
\end{lemma}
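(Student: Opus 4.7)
The first conclusion is immediate: strict positive definiteness implies positive definiteness, whence Theorem 3 gives that every $D_j$ is positive on $\C^{2j+1}$.

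For the second conclusion I would argue by contradiction; by symmetry only the odd case needs treatment. Assume $D_j=0$ for all odd $j$ outside a finite set $\{j_1,\dots,j_n\}$. The key geometric input is the parity identity $Y_j^k(-\xi)=(-1)^j Y_j^k(\xi)$, which suggests testing $K$ on configurations built from antipodal pairs of points with opposite-sign coefficients, so that every even block $y^j$ is automatically killed.

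Concretely, pick $N$ points $\xi_1,\dots,\xi_N\in\S$ in sufficiently general position so that the $2N$ points $\pm\xi_i$ are pairwise distinct, and let $\Xi=\{\pm\xi_i\}_{i=1}^N$ with $c_{\xi_i}=a_i$ and $c_{-\xi_i}=-a_i$. Using the block decomposition established in the proof of Theorem 3,
\begin{equation*}
y^j_k=\sum_{\xi\in\Xi}c_\xi Y_j^k(\xi)=\bigl(1-(-1)^j\bigr)\sum_{i=1}^N a_i Y_j^k(\xi_i),
\end{equation*}
so $y^j$ vanishes automatically for every even $j$. Imposing $y^{j_r}=0$ for $r=1,\dots,n$ yields $M:=\sum_{r=1}^n(2j_r+1)$ homogeneous linear equations in the $N$ unknowns $a_1,\dots,a_N$; choosing $N>M$ produces a nontrivial solution $a\neq 0$, and hence $c\neq 0$. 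By construction $y^j$ lies in the kernel of $D_j$ for every $j$, so
\begin{equation*}
\sum_{\xi,\zeta\in\Xi}c_\xi\overline{c_\zeta}K(\xi,\zeta)=\langle A(y),y\rangle=\sum_{j=0}^{\infty}\langle D_j(y^j),y^j\rangle_{\C^{2j+1}}=0,
\end{equation*}
contradicting strict positive definiteness. The even case is entirely parallel: set $c_{-\xi_i}=+a_i$ so that $y^j=0$ for odd $j$, and if $D_0\neq 0$ append the single extra equation $\sum_i a_i=0$ to the linear system.

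The only subtle point is a dimension count together with a genericity argument: one must be able to enlarge $N$ freely while keeping the $2N$ antipodal points distinct (a generic condition on $\S$) so that the resulting homogeneous linear system in the $a_i$ is underdetermined and admits a nontrivial solution. Everything else is bookkeeping around the block structure of $A$ exhibited in the proof of Theorem 3.
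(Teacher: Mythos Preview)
Your proof is correct and follows essentially the same strategy as the paper: exploit the parity relation $Y_j^k(-\xi)=(-1)^jY_j^k(\xi)$ to annihilate one parity class of blocks via antipodal point pairs with suitably signed coefficients, and then solve an underdetermined homogeneous linear system to kill the finitely many remaining nonzero blocks. Your organisation is in fact a bit cleaner than the paper's, which separates the argument into four cases (no odd / no even / finitely many even / finitely many odd) rather than absorbing the first two as the $n=0$ subcases of the latter two.
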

\begin{proof}
We assume $K$ is continuous, Hermitian and strictly positive definite. Since strict positive definiteness implies positive definiteness the operators $D_j$ are all positive  according to Theorem 3.  
The rest of the proof is divided into four cases. Let us denote the set of indices for which $D_j$ is nonzero with $j\in J$.
For all four we assume that $K$ is strictly positive definite of the form  \eqref{eq:Kernform2} and prove that assuming  either
\begin{enumerate}
\item $J \subset 2\N$ or
\item $J\subset 2\N+1$ or
\item $1\leq \vert J\cap 2\N\vert< \infty$ or
\item $1\leq \vert J\cap (2\N+1)\vert< \infty$,
\end{enumerate}
leads to a contradiction.

1. Let us now assume $D_j$ is nonzero\textbf{ only for even} values of $j$. We can represent the quadratic form as 
\begin{align*} \sum_{\xi,\zeta \in \Xi}c_{\xi} \overline{c_{\zeta}} K(\xi, \zeta)&=\sum_{j=0}^{\infty}\sum_{k=-j}^{j} \sum_{k'=-j}^j d_j(k,k')\sum_{\xi \in \Xi}c_{\xi} Y_j^k(\xi) \overline{\sum_{\zeta \in \Xi}c_{\zeta}Y_{j'}^{k'}(\zeta)} \\
& =\sum_{j=0}^{\infty} y^j_{\Xi} D_j \overline{y^j_{\Xi}},\quad \text{ with } y^j_{\Xi}=\left(\sum_{\xi \in \Xi}c_{\xi} Y_j^k(\xi)\right)_{k=-j}^j \in\C^{2j+1}.
\end{align*}
Choosing a non empty set of data sites $\Xi'$ which satisfies 
$$\xi \in \Xi'\ \Rightarrow \ -\xi \in \Xi',$$
where $-\xi$ is the antipodal point of $\xi$, and setting $c_{\xi}=-c_{-\xi}$ we find that $y_{\Xi'}^j=0 \in \C^{2j+1}$ for all even $j$ since $Y_j^k(-\xi)=(-1)^jY_j^k(\xi)=Y_j^k(\xi)$. 
This implies $\sum_{\xi,\zeta \in \Xi'}c_{\xi} \overline{c_{\zeta}} K(\xi, \zeta)=0$ and therefore $K$ would not be strictly positive definite.

2. The same argument applies when we assume $D_j$ is nonzero \textbf{ only for odd} values of $j$. The same set $\Xi'$ can be chosen but  now $c_{\xi}=c_{-\xi}$ yields the contradiction to the assumption of strict positive definiteness.

3. Now we assume that $D_j$ is nonzero for any number of odd values of $j$ and\textbf{ only finitely many even} values of $j$. Set $\hat{j}$ to the maximal even index for which $D_j$ is nonzero.
We aim to construct a set $\Xi$ with elements only in the lower hemisphere of $\S$ and $c\in \C^{\vert \Xi \vert}\neq 0$, s.t. 
$\sum_{\xi \in \Xi}c_{\xi} Y_j^k(\xi) =0 \in \C^{2j+1}$ for all even $j \leq \hat{j}$. 
These are $$\sum_{m=1}^{\hat{j}/2}(2(2m)+1)=\frac12 \hat{j}^2 +\frac32 \hat{j}$$
 linear equations, further 
there exists no point on $\S$ with $Y_j^k(\xi)=0$ for all even $j$ with $j\leq \hat{j}$ since $Y_0^0(\xi)=1$  for all $x \in \S$. We can therefore choose any set of distinct points in the lower hemisphere with more than $\frac12 \hat{j}^2 +\frac32 \hat{j}$ elements to find a non trivial solution.  Defining the set $\Xi'=\Xi \cup( -\Xi)$ and setting $c_{\xi}=-c_{-\xi}$ shows that 
 $$\sum_{\xi,\zeta \in \Xi'}c_{\xi} \overline{c_{\zeta}} K(\xi, \zeta)=0$$
 for a non trivial vector $c$ and therefore $K$ is not strictly positive definite.
 
4. The same arguments can be used to show that $D_j$ needs to be\textbf{ nonzero for infinitely many odd} values of $j$ with the extra argument that there exists no $Y_j^k(\xi)=0$ for all odd $j$ with $j\leq \hat{j}$ because this is not even possible for $\hat{j}=1$ with:
\begin{align*} Y_1^0 (\theta,\phi)=\frac12 \sqrt{\frac{3}{\pi}} \cos(\theta)&, \ Y_1^{-1}(\theta,\phi)=\frac12 \sqrt{\frac{3}{2\pi}} \sin(\theta) e^{-i\phi},\\  \ Y_1^{1}(\theta,\phi)&=\frac{-1}{2} \sqrt{\frac{3}{2\pi}} \sin(\theta) e^{i \phi}.
\end{align*}
\end{proof}

With the aim of finding sufficient conditions that are easy to evaluate we define the set $\mathcal{F}$ as the set of all indices $j \in  \N$ for which $D_j$ is a strictly positive operator on $\C^{2j+1}$.
\begin{lemma}
A continuous and Hermitian kernel $K$ of the form \eqref{eq:Kernform2} 
 is strictly positive definite if the operators \eqref{eq:Dj} are positive for all $j\in \N$ and if
$$\sum_{\xi \in \Xi}c_{\xi} Y_j^k(\xi)=0,\  \forall \ k=-j, \ldots,j,\  j\in \mathcal{F} \quad  \Rightarrow\quad c_{\xi }=0, \forall \xi \in \Xi.$$
\end{lemma}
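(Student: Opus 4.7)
The plan is to invoke Theorem 2 to reduce strict positive definiteness to showing $\langle A(y),y\rangle > 0$ for every nonzero $y\in Y$, and then to combine the hypothesis that all $D_j$ are positive with the block decomposition established in the proof of Theorem 3.

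Let $\Xi\subset\S$ be any finite set of distinct points and $c\in\C^{|\Xi|}$ a nonzero coefficient vector. Setting
$$y^j_\Xi = \left(\sum_{\xi\in\Xi} c_\xi Y_j^k(\xi)\right)_{k=-j}^j \in \C^{2j+1},$$
the same bookkeeping that produced the block decomposition in the proof of Theorem 3 yields
$$\sum_{\xi,\zeta\in\Xi} c_\xi \overline{c_\zeta} K(\xi,\zeta) = \sum_{j=0}^{\infty} \langle D_j(y^j_\Xi), y^j_\Xi\rangle_{\C^{2j+1}}.$$
By the positivity of every $D_j$ each summand on the right is non-negative, so the full sum is $\geq 0$; the whole task is to rule out equality.

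Suppose for contradiction that the sum vanishes. Then $\langle D_j(y^j_\Xi), y^j_\Xi\rangle = 0$ for every $j\in\N$. For each $j\in\mathcal{F}$ the operator $D_j$ is strictly positive on $\C^{2j+1}$, which forces $y^j_\Xi = 0$, i.e.\
$$\sum_{\xi\in\Xi} c_\xi Y_j^k(\xi) = 0 \qquad \text{for all } k=-j,\ldots,j,\ j\in\mathcal{F}.$$
The second hypothesis of the lemma then yields $c_\xi = 0$ for every $\xi\in\Xi$, contradicting the choice $c\neq 0$.

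Hence the quadratic form is strictly positive whenever $c$ is nonzero, and Theorem 2 delivers strict positive definiteness of $K$. There is no serious obstacle once Theorem 3 is in hand: the argument is a finite-dimensional block-by-block inspection, and no convergence, heat-kernel, or quadrature device is needed because $\Xi$ is finite. The only point worth flagging is the two-tier structure of the hypothesis, namely that positivity of every $D_j$ is used to ensure non-negativity of the full sum, while strict positivity restricted to the index set $\mathcal{F}$, together with the separating property of the $\mathcal{F}$-indexed spherical harmonics on $\Xi$, is what turns vanishing of the sum into the contradiction $c=0$.
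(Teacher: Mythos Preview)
Your argument is correct and follows essentially the same route as the paper: both use the block decomposition $\sum_{\xi,\zeta}c_\xi\overline{c_\zeta}K(\xi,\zeta)=\sum_j\langle D_j y^j_\Xi,y^j_\Xi\rangle$, observe that positivity of every $D_j$ makes each summand non-negative, and then use strict positivity on the indices $j\in\mathcal{F}$ to force $y^j_\Xi=0$ there, so that the second hypothesis yields $c=0$. The only cosmetic difference is that the paper phrases it as a contrapositive and refers back to the computation in Lemma~3 rather than Theorem~3, and your invocation of Theorem~2 at the end is harmless but unnecessary since you have already verified the definition of strict positive definiteness directly.
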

\begin{proof}
We prove the result by showing that $K$ not being strictly positive definite is  contradicting the last statement. If $K$ is not strictly positive definite there exists a nonempty set of distinct point $\Xi$ and coefficients $c_{\xi}$ not all zero with  
$$ \sum_{\xi,\zeta \in \Xi}c_{\xi} \overline{c_{\zeta}} K(\xi, \zeta)=0. $$
This is equivalent according to the computation in the proof of Lemma 3 to 
\begin{equation}\label{eq:SumDj}  \sum_{j=0}^{\infty} (y^j_{\Xi})^T D_j \overline{y^j_{\Xi}}=0.
\end{equation}
From Lemma 3 we know that if $K$ is positive definite the $D_j$ are positive semi definite and the quadratic form \eqref{eq:SumDj} can only be zero if  $(y^j_{\Xi})^T D_j \overline{y^j_{\Xi}}=0$ for all $j\in \N$.  This is only possible if $y^j_{\Xi}$ is a linear combination of eigenvectors of $D_j$ corresponding to the eigenvalue $0$ (or $y^j=0$ if the matrix $D_j$ is a positive definite matrix). Therefore \eqref{eq:SumDj} holds only if  
$$\sum_{\xi\in \Xi} c_{\xi}Y_j^k(\xi) = 0, \qquad  \forall \ k=-j, \ldots,j,\  j\in \mathcal{F}$$
which contradicts the last statement of the theorem.
\end{proof}
The next proposition follows immediately from the above lemma together with Theorem 2.
\begin{prop}A continuous and Hermitian kernel $K$ of the form \eqref{eq:Kernform2} 
 is strictly positive definite if the operators $D_j$ are strictly positive definite for all $j\in \N$.
\end{prop}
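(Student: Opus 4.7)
The plan is to reduce to the immediately preceding lemma. That lemma asserts strict positive definiteness of a block-diagonal kernel of the form \eqref{eq:Kernform2} under two hypotheses: (i) every $D_j$ is a positive operator, and (ii) a unisolvency condition involving the spherical harmonics whose degree indices lie in the set $\mathcal{F}\subset\N$ of indices for which $D_j$ is strictly positive. The present hypothesis --- that $D_j$ is strictly positive for every $j\in\N$ --- yields (i) at once (strict positivity implies positivity) and gives $\mathcal{F}=\N$ by definition of $\mathcal{F}$.

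With $\mathcal{F}=\N$, the unisolvency condition (ii) collapses to the statement that, for every finite set $\Xi\subset\S$ of distinct points,
\begin{equation*}
\sum_{\xi\in\Xi} c_{\xi}\, Y_j^k(\xi)=0\ \ \forall j\in\N,\,k=-j,\ldots,j \quad\Longrightarrow\quad c_{\xi}=0\ \ \forall\xi\in\Xi.
\end{equation*}
This is exactly the completeness/evaluation-independence assertion that already appeared inside the proof of Theorem 2: vanishing of all spherical harmonic coefficients of the finite discrete measure $\sum_{\xi}c_{\xi}\delta_{\xi}$ would force the functional $f\mapsto\sum_{\xi}c_{\xi}f(\xi)$ to annihilate $L^2(\S)$ (because $\{Y_j^k\}_{j,k}$ is a complete orthonormal basis), and linear independence of point-evaluation functionals at distinct points on $\S$ then forces $c=0$. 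Invoking the preceding lemma with both of its hypotheses verified delivers strict positive definiteness of $K$.

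I anticipate no substantial obstacle: the proposition is really the ``maximally strict'' special case of the preceding lemma, and the only supplementary ingredient --- the completeness/evaluation-independence argument for $L^2(\S)$ --- is standard and has already been assembled in the proof of Theorem 2, so essentially no new work is required beyond recording that $\mathcal{F}=\N$ under the hypothesis.
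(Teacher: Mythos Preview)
Your proposal is correct and matches the paper's own argument, which simply states that the proposition ``follows immediately from the above lemma together with Theorem 2.'' You have unpacked precisely this: the preceding lemma supplies the framework, and the proof of Theorem 2 furnishes the unisolvency argument needed when $\mathcal{F}=\N$.
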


With Lemma 4 we have shown that strict positive definiteness of these kernels can be proven using the same results which were used for zonal kernels by Chen et al.\ in \citep{Chen2003} and the alternative proof for more general manifolds stated by Barbosa and Menegatto in \citep{Barbosa2016}. We nevertheless include the rest of the proof for completeness and since the mentioned publications focused on approximation of real functions with zonal kernels the results were stated only for real $c$. 
\begin{lemma}
For a given set $\mathcal{F}$ the following two properties are equivalent:
\begin{enumerate}
\item $\sum_{\xi \in \Xi}c_{\xi} Y_j^k(\xi)=0,\  \forall \ k=-j, \ldots,j,\  j\in \mathcal{F}$ implies  $c_{\xi }=0,\ \forall \xi \in \Xi.$
\item $ \sum_{\xi\in \Xi }c_{\xi} P_{j}(\xi^T\zeta)=0,\ \forall j \in \mathcal{F},\ \zeta \in \S$ 
implies $c_{\xi }=0,\ \forall \xi \in \Xi$.
\end{enumerate}
\end{lemma}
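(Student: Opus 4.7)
The plan is to prove both implications by using the addition theorem for spherical harmonics as the bridge between the two conditions. Recall that the addition theorem gives
$$P_j(\xi^T \zeta) = \frac{4\pi}{2j+1} \sum_{k=-j}^{j} Y_j^k(\xi)\,\overline{Y_j^k(\zeta)}, \qquad \xi,\zeta \in \S,$$
so the Legendre polynomial in the inner product is exactly a weighted sum of the products $Y_j^k(\xi)\overline{Y_j^k(\zeta)}$ over the level $j$. This identity links the two sets of conditions level by level in $\mathcal{F}$.

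For the direction (1)$\Rightarrow$(2), I would assume the hypothesis of (2), namely that $\sum_{\xi\in\Xi} c_\xi P_j(\xi^T \zeta)=0$ for every $j\in\mathcal{F}$ and every $\zeta\in\S$. Substituting the addition theorem and pulling the finite sum over $\xi$ inside gives
$$0 = \frac{4\pi}{2j+1}\sum_{k=-j}^{j} \overline{Y_j^k(\zeta)}\,\Bigl(\sum_{\xi\in\Xi} c_\xi Y_j^k(\xi)\Bigr), \qquad \forall \zeta\in\S.$$
Viewed as a function of $\zeta$, this is a spherical harmonic expansion in level $j$, whose coefficients $\sum_{\xi\in\Xi} c_\xi Y_j^k(\xi)$ must therefore vanish for each $k=-j,\ldots,j$ by the linear independence (orthogonality) of the $Y_j^k$. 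This is precisely the hypothesis of (1), so (1) then forces $c_\xi=0$ for all $\xi\in\Xi$.

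For the converse (2)$\Rightarrow$(1), I would run the same calculation in reverse. Assume $\sum_{\xi\in\Xi} c_\xi Y_j^k(\xi)=0$ for all $k=-j,\ldots,j$ and all $j\in\mathcal{F}$. Multiplying the $k$-th equation by $\overline{Y_j^k(\zeta)}$ and summing over $k$, and then using the addition theorem, yields $\sum_{\xi\in\Xi} c_\xi P_j(\xi^T\zeta)=0$ for every $j\in\mathcal{F}$ and every $\zeta\in\S$. Invoking (2) then gives $c_\xi=0$ for all $\xi\in\Xi$.

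There is no real obstacle here: the proof is essentially a one-line application of the addition theorem in each direction, together with the linear independence of the spherical harmonics of a fixed degree. The only care needed is in bookkeeping the finite sum over $\Xi$ versus the (finite, per level $j$) sum over $k$, and in noting that linear independence of the $Y_j^k$ across all $k$ for a fixed $j$ is enough to extract the coefficients; one does not need to argue across different values of $j$.
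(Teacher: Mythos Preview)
Your proof is correct and is essentially the same as the paper's: both use the addition theorem to show that, for each fixed $j$, the vanishing of $\sum_{\xi}c_\xi P_j(\xi^T\zeta)$ for all $\zeta$ is equivalent to the vanishing of $\sum_{\xi}c_\xi Y_j^k(\xi)$ for all $k$, via the linear independence of the spherical harmonics. The only cosmetic difference is that the paper compresses the argument into a single ``if and only if'' on the hypotheses, while you spell out both directions separately.
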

\begin{proof}
Using the addition formula of the Legendre polynomials we find $$P_{j}(\xi^T\zeta)=\frac{4\pi}{2j+1}\sum_{k=-j}^j Y_{j}^k(\zeta)\overline{Y_{j}^k(\xi).
}$$
Therefore 
$$ \sum_{\xi \in \Xi}c_{\xi} P_{j}(\xi^T\zeta)=\sum_{k=-j}^j \left(\sum_{\xi \in \Xi}c_{\xi}Y_{j}^k(\xi)\right) \overline{Y_{j}^k(\zeta)}, \qquad \forall \zeta\in \S.$$
Since the spherical harmonics are linearly independent the last is  zero if and only if 
$$\sum_{\xi \in \Xi}c_{\xi}Y_{j}^k(\xi)=0$$ for all $k=-j,\ldots ,j$ and all $j\in \mathcal{F}$. 

\end{proof}

\begin{thm}
Let $K$ be a continuous positive definite kernel of the form \eqref{eq:Kernform2} and $\mathcal{F}$ the corresponding index set for which the $D_j$ are strictly positive operators. Then it is sufficient for $K$ to be strictly positive definite that $\mathcal{F}$ includes infinitely many even and infinitely many odd values of $j\in\N$.
\end{thm}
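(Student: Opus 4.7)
The plan is to reduce the claim, via Lemma 4 and Lemma 5, to the classical characterisation of strictly positive definite zonal kernels on $\S$ due to Chen, Menegatto and Sun \citep{Chen2003}.

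Since $K$ is positive definite by hypothesis, Theorem 3 guarantees that every $D_j$ is a positive operator, so the first hypothesis of Lemma 4 is automatic. To apply Lemma 4 it remains to verify, for every finite set $\Xi\subset\S$ of distinct points and every $(c_\xi)_{\xi\in\Xi}\subset\C$, the implication
$$\sum_{\xi\in\Xi} c_\xi Y_j^k(\xi) = 0,\ \forall k=-j,\ldots,j,\ \forall j\in\mathcal{F} \ \Longrightarrow\ c_\xi = 0,\ \forall \xi\in\Xi.$$
By Lemma 5 this is equivalent to the Legendre polynomial formulation
$$\sum_{\xi\in\Xi} c_\xi P_j(\xi^T\zeta) = 0,\ \forall \zeta\in\S,\ \forall j\in\mathcal{F} \ \Longrightarrow\ c_\xi = 0,\ \forall \xi\in\Xi.$$

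This last implication is exactly the linear-independence condition that Chen, Menegatto and Sun isolated as the crux of their characterisation for zonal kernels, and they proved that it holds under precisely the hypothesis that the index set contains infinitely many even and infinitely many odd integers; a more general derivation is given by Barbosa and Menegatto \citep{Barbosa2016}. The underlying mechanism is a generating-function argument: via $\sum_j r^j P_j(t) = (1-2rt+r^2)^{-1/2}$ the even- and odd-degree zonal harmonics decouple into the symmetric and antisymmetric parts (under the antipodal map) of the measure $\sum_\xi c_\xi \delta_\xi$, so that infinitely many vanishing Fourier coefficients of each parity force the measure itself to vanish, and hence $c=0$ because the points of $\Xi$ are distinct.

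One minor technicality remains: the results in \citep{Chen2003, Barbosa2016} are formulated for real coefficients, while Definition \ref{DF:SPD} requires $c_\xi\in\C$. Since $P_j(\xi^T\zeta)\in\R$, writing $c_\xi = a_\xi + i\,b_\xi$ splits the hypothesis into two independent real vanishing sums, and applying the real-valued result to each forces $a_\xi = b_\xi = 0$. Combining this with Lemma 4 delivers strict positive definiteness of $K$. The principal obstacle is thus entirely encapsulated in the Chen-Menegatto-Sun linear-independence theorem, which I would invoke as a black box; the remainder of the proof is the routine assembly of the reductions above.
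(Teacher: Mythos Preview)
Your reduction via Lemmas~4 and~5 to the Legendre-polynomial linear-independence condition is exactly the paper's route; the only difference is that the paper, instead of citing \citep{Chen2003} as a black box, supplies a self-contained argument by taking $\zeta=\xi$ (with a separate antipodal case), invoking the asymptotic $P_{j}(\cos\theta)\sim\sqrt{2/(\pi j\sin\theta)}\cos\bigl((j+\tfrac12)\theta-\tfrac{\pi}{4}\bigr)$, and letting $j\to\infty$ along even and odd subsequences of $\mathcal F$ to kill the off-diagonal terms. One side remark: your informal description of the mechanism as a generating-function/Fourier-coefficient argument is not how the paper (or the standard proof) actually runs---it is an asymptotic-decay argument, and ``infinitely many vanishing coefficients'' alone would not force a discrete measure to vanish---but since you invoke the result as a black box this does not affect the validity of your proposal.
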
 
\begin{proof}Combining Lemma 6 and Lemma 7 we know that for $K$ to be strictly positive definite  it is sufficient to prove that for arbitrary sets of distinct data sites $\Xi \subset \S$ the functions 
$P_j(\xi^T\zeta)$ satisfy Lemma 7 (2). Therefore we show that for any such set $\Xi$
$$\sum_{\xi \in \Xi}c_{\xi}P_j(\xi^T\zeta)=0, \qquad  \forall j \in \mathcal{F},\ \zeta\in \S,$$
implies $c_{\xi}=0$ for all  $\xi \in \Xi$.
We do this by choosing for each $\xi\in \Xi$ a corresponding $\zeta=\zeta_{\xi}\in \S$ and show that for this choice $c_{\xi}=0$ if the above holds.
Assume we have fixed $\xi \in \Xi$, we need to distinguish two cases.

\textbf{Case 1:} $ \xi^T\zeta\neq -1$ for all $\zeta \in \Xi.$

In this case we choose $\zeta_{\xi}=\xi$ and the system above takes the form  
$$c_{\xi} P_j(1) +\sum_{\zeta \in \Xi\setminus \lbrace \xi \rbrace}c_{\zeta}P_j(\zeta^T\xi)=0, \qquad  \forall j \in \mathcal{F}.$$
Note that $\xi^T\zeta \in (-1,1)$ and we set $\theta_{\zeta}=\arccos(\xi^T\zeta)\in (0,\pi)$. 
Since there are infinitely many even and odd indices in $\mathcal{F}$ we can choose a sequence from $\mathcal{F}$ with $j_n\in \mathcal{F}$ and $\underset{n \rightarrow \infty}{\lim} j_n=\infty$.
Introducing the limit in the above equation and using the asymptotic form of the Legendre polynomials for $n \rightarrow \infty$ (8.721.3, \citep{Gradshteyn2014}), 
\begin{align*} c_{\xi} + \underset{n \rightarrow \infty}{\lim} \sum_{\zeta \in \Xi \setminus \lbrace \xi \rbrace}c_{\zeta}\left( \frac{2}{\sqrt{2\pi j_n \sin(\theta_{\zeta})}}\cos\left(\left(j_n + \tfrac12\right)\theta_{\zeta} - \frac{\pi}{4}\right) + \mathcal{O}\left(j_n^{-3/2}\right) \right)=0 
\end{align*}
implies $c_{\xi}=0$ because the sum is finite.

\textbf{Case 2} There is one $\zeta \in \Xi $ with $\xi^T \zeta =-1$.

Then $\zeta$ is the antipodal point of $\xi$ and in Cartesian coordinates $\xi =-\zeta$.  Then the above equation becomes 
$$c_{\xi} P_j(1) +(-1)^j c_{-\xi} P_j(1)+\sum_{\zeta \in \Xi\setminus \lbrace \xi, -\xi \rbrace}c_{\zeta}P_j(\zeta^T\xi)=0, \qquad  \forall j \in \mathcal{F}.$$
The third part vanishes if we introduce the limit as in the previous case but for odd and even series of $j_n$ separately. The remainder yields for even $j$ and odd $j$ respectively  
$$ c_{\xi }+c_{-\xi}=0= c_{\xi}-c_{-\xi}$$
which implies $c_{\xi}=0=c_{-\xi}$.
\end{proof}

From this we can easily deduce a generalisation of the results of Chen et al.\ which were stated for zonal kernels for the case of kernels which are of diagonal form.
\begin{prop}
A continuous kernel of the form 
$$ K(\xi,\zeta)=\sum_{j=0}^{\infty} \sum_{k=-j}^j a_{j,k} Y_j^k(\xi) \overline{Y_j^k}(\zeta), \qquad a_{j,k} \in \R,$$ 
is strictly positive definite if $a_{j,k}\geq0$ always and $a_{j,k}>0$ for all $k=-j,\ldots,j$ for infinitely many even and infinitely many odd values of $j$.
\end{prop}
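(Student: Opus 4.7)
The plan is to recognise the given kernel as a special instance of the block-structured kernels studied in the previous subsection, and then apply Theorem 4 almost directly. Writing
$$K(\xi,\zeta)=\sum_{j=0}^{\infty}\sum_{k=-j}^{j}\sum_{k'=-j}^{j} d_j(k,k')\,Y_j^k(\xi)\overline{Y_j^{k'}(\zeta)}$$
with $d_j(k,k'):=a_{j,k}\,\delta_{k,k'}$ shows that $K$ has the form \eqref{eq:Kernform2}. Hermiticity follows from $a_{j,k}\in\R$, and continuity is assumed.

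The first step is to identify the operators $D_j$ defined in \eqref{eq:Dj}. With the coefficients above, the matrix representation of $D_j$ is the diagonal matrix $D_j=\diag(a_{j,-j},a_{j,-j+1},\ldots,a_{j,j})$. Since every diagonal entry is non-negative by assumption, each $D_j$ is a positive operator on $\C^{2j+1}$; therefore, by Theorem 3, $K$ is positive definite.

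The second step is to identify the index set $\mathcal{F}$ from Theorem 4. A diagonal matrix is strictly positive definite if and only if all its diagonal entries are strictly positive, so
$$\mathcal{F}=\{\,j\in\N : a_{j,k}>0 \text{ for all } k=-j,\ldots,j\,\}.$$
The hypothesis of the proposition is exactly that $\mathcal{F}$ contains infinitely many even and infinitely many odd integers.

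The final step is to invoke Theorem 4: since $K$ is a continuous positive definite kernel of the form \eqref{eq:Kernform2} and the corresponding $\mathcal{F}$ contains infinitely many even and infinitely many odd indices, $K$ is strictly positive definite. There is essentially no obstacle here; the whole argument is a bookkeeping step to show that the diagonal assumption puts the result squarely within the framework already built, and the real work was done in Lemmata 6--7 and Theorem 4.
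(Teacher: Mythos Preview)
Your argument is correct and is exactly the approach the paper intends: Proposition~2 is stated as an immediate consequence of Theorem~4, and you have spelled out the bookkeeping (diagonal $D_j$, identification of $\mathcal{F}$) that makes this deduction precise.
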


\subsection*{Acknowledgement}
The work of J. J\"ager was supported by the Justus Liebig University's postdoctoral fellowship Just'Us

\def\ln{\log}

\par\bigskip\bigskip\noindent
\bibliographystyle{abbrvnat}
\bibliography{C:/Users/Janin/JLUbox/Dokumente/LiteraturALL.bib}

\end{document}